\newtheorem{theorem}{Theorem}[section]
\newtheorem{lemma}[theorem]{Lemma}
\newtheorem{proposition}[theorem]{Proposition}
\theoremstyle{definition}
\newtheorem{definition}[theorem]{Definition}
\newtheorem{remark}[theorem]{Remark}
\newtheorem{example}[theorem]{Example}
\theoremstyle{remark}
\renewcommand{\theclaim}{\textup{\theclaim}}
\newtheorem*{acknowledgements}{Acknowledgments}
\numberwithin{equation}{section}
\def\openone%{\hbox{\upshape \small1\kern-3.3pt\normalsize1}}
\newbox\ipbox
\newcommand{\ip}[2]{\left\langle #1\mathrel{\mathchoice
{\setbox\ipbox=\hbox{$\displaystyle \left\langle\mathstrut #1#2\right\rangle$}
\vrule height\ht\ipbox width0.25pt depth\dp\ipbox}
{\setbox\ipbox=\hbox{$\textstyle \left\langle\mathstrut #1#2\right\rangle$}
\vrule height\ht\ipbox width0.25pt depth\dp\ipbox}
{\setbox\ipbox=\hbox{$\scriptstyle \left\langle\mathstrut #1#2\right\rangle$}
\vrule height\ht\ipbox width0.25pt depth\dp\ipbox}
{\setbox\ipbox=\hbox{$\scriptscriptstyle \left\langle\mathstrut #1#2\right\rangle$}
\vrule height\ht\ipbox width0.25pt depth\dp\ipbox}
} #2\right\rangle}
\newcommand{\diracb}[1]{\left\langle #1\mathrel{\mathchoice
{\setbox\ipbox=\hbox{$\displaystyle \left\langle\mathstrut #1\right.$}
\vrule height\ht\ipbox width0.25pt depth\dp\ipbox}
{\setbox\ipbox=\hbox{$\textstyle \left\langle\mathstrut #1\right.$}
\vrule height\ht\ipbox width0.25pt depth\dp\ipbox}
{\setbox\ipbox=\hbox{$\scriptstyle \left\langle\mathstrut #1\right.$}
\vrule height\ht\ipbox width0.25pt depth\dp\ipbox}
{\setbox\ipbox=\hbox{$\scriptscriptstyle \left\langle\mathstrut #1\right.$}
\vrule height\ht\ipbox width0.25pt depth\dp\ipbox}
}\right. }
\newcommand{\dirack}[1]{\left. \mathrel{\mathchoice
{\setbox\ipbox=\hbox{$\displaystyle \left.\mathstrut #1\right\rangle$}
\vrule height\ht\ipbox width0.25pt depth\dp\ipbox}
{\setbox\ipbox=\hbox{$\textstyle \left.\mathstrut #1\right\rangle$}
\vrule height\ht\ipbox width0.25pt depth\dp\ipbox}
{\setbox\ipbox=\hbox{$\scriptstyle \left.\mathstrut #1\right\rangle$}
\vrule height\ht\ipbox width0.25pt depth\dp\ipbox}
{\setbox\ipbox=\hbox{$\scriptscriptstyle \left.\mathstrut #1\right\rangle$}
\vrule height\ht\ipbox width0.25pt depth\dp\ipbox}
} #1\right\rangle}
\begin{document}
\title[New Presentations of Thompson's Groups]{New Presentations of Thompson's Groups and Applications }
\author{Uffe Haagerup}
\address{Department of Mathematics and Computer Science\\
University of Southern Denmark\\
Campusvej 55\\
DK-5230 Odense M\\
Denmark}
\email{haagerup@imada.sdu.dk}
\author{Gabriel Picioroaga}
\address{Department of Mathematical Sciences\\
Binghamton University\\
U.S.A.}
\email{gabriel@math.binghamton.edu}
\thanks{}
\subjclass{}
\keywords{}

\begin{abstract} We find new presentations for the Thompson's groups $F$, the derived group $F^{'}$ and the intermediate group $D$. These presentations have a common ground in that their relators are the same and only the generating sets differ. As an application of these presentations we extract the following consequences: the cost of the group $F^{'}$ is $1$ hence the cost cannot decide the (non)amenability question of $F$; the $II_1$ factor $L(F^{'})$ is inner asymptotically abelian and the reduced $C^*$-algebra of $F$ is not residually finite dimensional.
\end{abstract}\maketitle

\section{Introduction}
The Thompson group $F$ can be regarded as the group of piecewise-linear,\\ orientation-preserving 
homeomorphisms of the unit interval which have breakpoints only at dyadic points and on intervals of 
differentiability the slopes are powers of two. The group was discovered in the '60s by Richard Thompson 
and in connection with the now celebrated groups $T$ and $V$ led to the first example of a finitely 
presented infinite simple group. Also, it has been shown that the commutator subgroup $F^{'}$ of $F$ is simple. 
\par In 1979 R. Geoghegan conjectured that $F$ is not amenable. This problem is still open and of importance for group theory: either outcome will help better understand the inclusions $\mathcal E A\subset\mathcal A G\subset\mathcal N F$, where $ \mathcal E A$ is the class of elementary amenable groups, $ \mathcal A G$ is the class of amenable groups and $\mathcal N F$ is the class of groups not containing free (non-abelian) groups. By work of Grigorchuck \cite{Gr}, Olshanskii and Sapir \cite{OS}, the inclusions above are strict.
\par There are properties stronger than amenability and also weaker ones. There is naturally a great deal of interest in knowing which ones hold or fail in the case of $F$. For example, from the 'weak'  perspective the question of exactness has been put forward in \cite{AGS}. We also find a two-folded interest in whether or not the reduced $C^*$ algebra of $F$ is quasidiagonal (QD): by a result of Rosenberg in \cite {Ha} this property implies that the group is amenable. It is also conjectured that any countable amenable group generates a QD reduced $C^*$ algebra. As a consequence, the (non)QD property gives another spin to the amenability question of $F$. We prove a weaker result than non-QD, namely the reduced $C^*$ algebra of $F$ is not {\it{residually finite dimensional}}. 
\par The Thompson's groups have infinite conjugacy classes and therefore the associated von Neumann algebras are $II_1$ factors (see \cite{Jol}). Also, P. Jolissaint proved that the $II_1$ factor associated with the Thompson group $F$ has the relative McDuff property with respect to the $II_1$ factor determined by $F^{'}$; in particular both are McDuff factors. By finding a presentation of the commutator subgroup we naturally recover another result of Jolissaint (\cite{JR}), namely that the $II_1$ factor $L(F^{'})$ is (inner) asymptotically abelian. The last property has been introduced by S. Sakai in the 70's and consists essentially of a stronger requirement than property $\Gamma$ of Murray and von Neumann: instead of a sequence of unitaries almost commuting with the elements of the factor one wants a sequence of (inner) *-isomorphisms to do the job. Moreover, asymptotically abelian is a stronger property than McDuff (see the Background section below). 
\par In \cite{Gab}, D. Gaboriau introduced a new dynamical invariant for a countable discrete group called  cost. Infinite amenable groups have cost 1 and also Thompson's group $F$ has cost 1, while the free group on $n$ generators has cost $n$. As the cost non-decreases when passing to normal subgroups, finding the cost of $F^{'}$ becomes an interesting question. Using our new presentation of $F^{'}$ and one of the tools developed by Gaboriau we show that $F^{'}$ has cost 1 as well (and because $F^{'}$ is simple we get that any non-trivial normal subgroup of $F$ has cost 1). It is very likely that any non-trivial subgroup of $F$ has cost $1$. This problem might be related to a conjecture of M. Brin: {\it any subgroup of $F$ is either elementary amenable or contains a copy of $F$} (Conjecture 4 in \cite{Br}). 
\par The paper is organized as follows: the Background  section prepares some basics on the Thompson groups, group von Neumann algebras and cost of groups. We have collected some known facts and also folklore-like facts, mostly about the Thompson's groups. The follow-up to this section is our main result which describes various presentations of the groups $F$, $F^{'}$ and $D$. Next section of the paper contains conclusions of these presentations.

\section{Background}
\subsection{Thompson's Groups} For a good introduction of Thompson's groups we refer the reader to \cite{Can}.  
\begin{definition}\label{def1} The Thompson group $F$ is the set of piecewise 
linear homeomorphisms from the closed unit interval $[0,1]$ to itself that are differentiable except at 
finitely many dyadic rationals and such that on intervals of differentiability the derivatives are 
powers of $2$.

\end{definition}
\begin{remark}\label{fgen} The group $F$ is shown to have the following finite presentation: \\
$\left< A, B\right>$ with relations $[AB^{-1},A^{-1}BA]=1$ and $[AB^{-1},A^{-2}BA^2]=1$. \\
Also, $F$ has a useful infinite presentation: \\ 
$F=\left< x_0, x_1, ...x_i,...|\mbox{ }x_jx_i=x_ix_{j+1}\mbox{, }i<j\mbox{ }\right>$.\\
This is obtained by declaring $x_0=A$, $x_n=A^{-(n-1)}BA^{n-1}$. 
\end{remark}
\begin{figure}\label{f0}
\begin{center}
\input{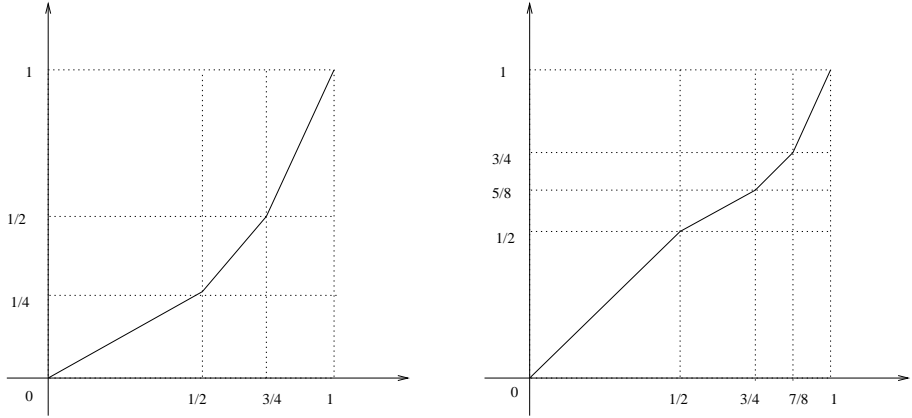}
\caption{Graphs of generators $A=x_0$ and $B=x_1$ }
\end{center}
\end{figure}

As a map on the unit interval $x_n$ is given by
\begin{equation}\label{xn}
x_n(t)=\left\{
\begin{array}{lr}
t,&\mbox{ if }0\leq t\leq 1-2^{-n} \\
\frac{t}{2}+\frac{1}{2}(1-2^{-n}),&\mbox{ if }1-2^{-n}\leq t\leq 1-2^{-n-1}\\
t-2^{-n-2},&\mbox{ if }1-2^{-n-1} \leq t\leq 1-2^{-n-2}\\
2t-1,&\mbox{ if }1-2^{-n-2}\leq t\leq 1\\
\end{array}\right.
\end{equation}
The following result can be found in \cite{Can}. 
\begin{proposition}\label{pro0} Let $F$ be given as in definition \ref{def1}.\\ 
i) The subgroup  
 $$F^{'}:=\{f\in F\mbox{ }|\mbox{ }\exists\mbox{}\delta,\epsilon\in (0,1)\mbox{ such that } f_{|[0,\epsilon]}=id\mbox{, }f_{|[\delta,1]}=id\mbox{ }\}$$ 
is normal and simple. Moreover, $F^{'}$ is the commutator (or the derived group) of $F$.\\
ii) Any non-trivial quotient of $F$ is abelian.
\end{proposition}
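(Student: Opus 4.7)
The plan for part (i) is to proceed in three stages: first show $F'$ is a normal subgroup, then identify it with the commutator $[F,F]$, then establish simplicity. That $F'$ is a subgroup is immediate (intersect the endpoint intervals on which two given functions act trivially). For normality, any $h\in F$ fixes $0,1$ and is a homeomorphism, so $h^{-1}$ carries a one-sided neighborhood of an endpoint into a one-sided neighborhood of the same endpoint; conjugation thus preserves the property of acting trivially near $\{0,1\}$.

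To identify $F'$ with $[F,F]$, I would introduce the endpoint-slope map $\phi\colon F\to \mathbb{Z}^2$, $\phi(f) = (\log_2 f'(0^+),\,\log_2 f'(1^-))$. The chain rule makes $\phi$ a homomorphism; it is surjective since (\ref{xn}) gives $\phi(x_0)=(-1,1)$ and $\phi(x_1)=(0,1)$, a $\mathbb{Z}$-basis of $\mathbb{Z}^2$; and $\ker\phi$ is exactly the $F'$ of the proposition, because a piecewise linear dyadic map with slope $1$ immediately to the right of $0$ (or to the left of $1$) must in fact coincide with the identity on a small interval there. Hence $F/F'\cong \mathbb{Z}^2$, so $[F,F]\subseteq F'$. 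For the reverse inclusion, abelianize the infinite presentation of Remark \ref{fgen}: the relation $x_j x_i = x_i x_{j+1}$ ($i<j$) collapses to $x_j = x_{j+1}$, so $F^{ab}$ is generated freely by the classes of $x_0,x_1$ and is itself $\mathbb{Z}^2$. The natural surjection $F/[F,F]\twoheadrightarrow F/F'$ is then a surjection $\mathbb{Z}^2\twoheadrightarrow\mathbb{Z}^2$, hence an isomorphism (Hopfian property of finitely generated abelian groups), and $[F,F]=F'$.

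Simplicity of $F'$ is the main obstacle, and I would invoke the classical argument from \cite{Can}: given a nontrivial $g\in F'$, one locates a dyadic subinterval on which $g$ is strictly linear with slope $\neq 1$; the conjugations available inside $F'$ allow one to transport this behavior into an arbitrarily prescribed dyadic position, and a short commutator calculation shows that the normal closure of $g$ in $F'$ contains every element supported in any given compact subinterval of $(0,1)$. Since such small-support elements generate $F'$, the normal closure is all of $F'$, so $F'$ is simple.

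Part (ii) then falls out formally. Let $N\triangleleft F$ be nontrivial. Then $N\cap F'\triangleleft F'$, so by simplicity either $N\cap F'=F'$ or $N\cap F'=\{1\}$. In the first case $F'\subseteq N$, so $F/N$ is a quotient of $F/F'\cong \mathbb{Z}^2$ and is abelian. In the second case $[N,F']\subseteq N\cap F'=\{1\}$, so every $g\in N$ centralizes $F'$; but for any $g\neq \mathrm{id}$ there is an $x\in(0,1)$ with $g(x)\neq x$, and choosing $h\in F'$ supported on a small neighborhood of $g(x)$ disjoint from $x$ and moving $g(x)$ yields $(gh)(x)=g(x)\neq h(g(x))=(hg)(x)$, a contradiction. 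Hence the second case cannot happen for nontrivial $N$, and $F/N$ is always abelian.
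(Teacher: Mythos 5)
The paper gives no proof of Proposition \ref{pro0} at all --- it simply cites \cite{Can} --- so there is no internal argument to compare against; what you have written is essentially the standard treatment from that reference, and it is correct where it is complete. The identification $F'=[F,F]$ via the endpoint-slope homomorphism $\phi(f)=(\log_2 f'(0^+),\log_2 f'(1^-))$ is sound: $\phi$ is a surjection onto $\mathbb{Z}^2$ (your computation $\phi(x_0)=(-1,1)$, $\phi(x_1)=(0,1)$ from (\ref{xn}) is right), its kernel is exactly the $F'$ of the statement because a dyadic PL map fixing $0$ with one-sided slope $1$ there is the identity on a small interval, and the Hopfian argument comparing $F/[F,F]\cong\mathbb{Z}^2$ (from abelianizing the infinite presentation) with $F/F'\cong\mathbb{Z}^2$ closes the loop. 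Your deduction of (ii) from simplicity --- the dichotomy $N\cap F'=F'$ or $N\cap F'=\{1\}$, with the second branch killed because an element moving some $x\in(0,1)$ cannot commute with an $h\in F'$ supported near $g(x)$ and away from $x$ --- is clean and complete, and in fact recovers the consequence the paper actually uses later (every nontrivial normal subgroup contains $F'$). The one place where real content is missing is the simplicity of $F'$ itself: your sketch (``transport a nontrivial linear piece anywhere, then a short commutator calculation'') elides the key mechanism, which in \cite{Can} is a criterion of Higman applied to the action on dyadic intervals, exploiting that for $g\neq 1$ one can find an interval $J$ with $g(J)\cap J=\emptyset$ so that commutators with elements supported in $J$ land in the normal closure of $g$. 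Since you explicitly defer to \cite{Can} there --- exactly as the paper does for the whole proposition --- this is an acceptable citation rather than an error, but it is the only part of the statement with genuine depth, so a self-contained write-up would need to supply it.
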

As a consequence, any non trivial normal subgroup of $F$ must contain $F^{'}$. In the next section we will give a (infinite) presentation of $F^{'}$ and of the intermediate normal subgroup introduced in \cite{Jol} 
$$D:=\{f\in F\mbox{ }|\mbox{ }\exists\mbox{}\delta\in (0,1)\mbox{ such that }f_{|[\delta,1]}=id,\mbox{ }\}.$$
The following finite presentation of $F$ is well known to specialists. Starting with $n\geq 4$,  with notations $A=x_0$, $B=x_1$ we can follow the proof of Theorem 3.1 in \cite{Can} by rewritting first the two relators in the finite presentation of $F$ as $x_3=x_1^{-1}x_2x_1$ and $x_4=x_1^{-1}x_3x_1$. 

\begin{lemma}\label{lem1} Let $n\geq 4$. The Thompson group $F$ is isomorphic to the group generated by $x_0$, $x_1$,..., $x_n$ subject to relations 

\begin{equation}\label{fig}
x_jx_i=x_ix_{j+1}\mbox{ for all } 0\leq i< j\leq n-1
\end{equation}
(Only $n+1$ generators are used.)
\end{lemma}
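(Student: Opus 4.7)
The plan is to exhibit mutually inverse homomorphisms between $F$ and the presented group
$$G_n := \langle x_0,x_1,\ldots,x_n \mid x_jx_i=x_ix_{j+1},\ 0\leq i<j\leq n-1\rangle.$$

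First I would observe that in $G_n$ the relations with $i=0$, namely $x_jx_0=x_0x_{j+1}$ for $1\leq j\leq n-1$, give $x_{k+1}=x_0^{-1}x_kx_0$ recursively, so by induction $x_k=x_0^{-(k-1)}x_1x_0^{k-1}$ for $1\leq k\leq n$. In particular $G_n$ is generated by $x_0$ and $x_1$ alone, and for the elements $x_2,x_3,x_4$ (which exist in $G_n$ because $n\geq 4$) this expression matches the one used to define them from $A=x_0$, $B=x_1$ in the finite presentation of Remark \ref{fgen}.

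Next I would build a map $\phi\colon F\to G_n$ by $A\mapsto x_0,\ B\mapsto x_1$. Well-definedness requires that the two relators $[AB^{-1},A^{-1}BA]$ and $[AB^{-1},A^{-2}BA^2]$ become trivial in $G_n$. As noted in the excerpt (following Cannon), a direct rewrite turns these into $x_3=x_1^{-1}x_2x_1$ and $x_4=x_1^{-1}x_3x_1$, i.e.\ $x_2x_1=x_1x_3$ and $x_3x_1=x_1x_4$. These are precisely the $(i,j)=(1,2)$ and $(i,j)=(1,3)$ instances of the defining relations of $G_n$, both of which are available because the hypothesis $n\geq 4$ guarantees $j\leq n-1=3$. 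Thus $\phi$ is well-defined, and it is surjective because $G_n$ is generated by $x_0,x_1$ by Step 1.

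Conversely, the infinite presentation of $F$ in Remark \ref{fgen} provides all relations $x_jx_i=x_ix_{j+1}$ for $i<j$, and in particular those with $0\leq i<j\leq n-1$. Therefore mapping each $x_i\in G_n$ to the corresponding $x_i\in F$ defines a homomorphism $\psi\colon G_n\to F$. Since $\psi\circ\phi$ fixes the generators $A,B$ of $F$ it equals $\mathrm{id}_F$, and since $\phi\circ\psi$ fixes $x_0,x_1$ (hence all of $x_0,\ldots,x_n$ by Step 1) it equals $\mathrm{id}_{G_n}$. Consequently $\phi$ is an isomorphism.

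The only real content is the verification in Step 2 that the two commutator relators of the finite presentation, once the substitutions $x_k=x_0^{-(k-1)}x_1x_0^{k-1}$ are made, literally become the two relations of $G_n$ for $(i,j)=(1,2)$ and $(1,3)$; this is a short manipulation but is where the bound $n\geq 4$ is used, and everything else is formal.
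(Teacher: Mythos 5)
Your argument is correct and follows essentially the same route the paper indicates: it reduces the two commutator relators of the finite presentation to the $(i,j)=(1,2)$ and $(1,3)$ instances $x_2x_1=x_1x_3$ and $x_3x_1=x_1x_4$ (using the $i=0$ relations to identify $x_k$ with $x_0^{-(k-1)}x_1x_0^{k-1}$), which is exactly the rewriting the paper attributes to the proof of Theorem~3.1 in \cite{Can}. Your version merely makes the two von Dyck maps and their mutual inverseness explicit, which is a fine way to write it up.
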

\begin{remark}\label{remR}
There is a classic procedure to realize $F$ as a group of transformations on $\mathbb{R}$. Let \~{F} be a subgroup of the group of piece-wise linear transformations of the real line such that its elements:\\
$\bullet$ have finitely many breakpoints and only at dyadic real numbers;\\
$\bullet$ have slopes in $2^{\mathbb{Z}}$;\\
$\bullet$ are translations by integers outside a dyadic interval.\\
Then \~{F}$=\varphi F\varphi^{-1}$ where $\varphi:(0,1)\rightarrow\mathbb{R}$ is defined as follows:\\
 $\varphi(t_n)=n$ and $\varphi$ is affine in $[t_n,t_{n+1}]$, for all $n\in\mathbb{Z}$ where 
$$t_n=\left\{
\begin{array}{lr}
1-(\frac{1}{2})^{n+1},&\mbox{ if }n\geq 0 \\
(\frac{1}{2})^{1-n},&\mbox{ if }n<0 
\end{array}\right.$$
To recover the generators in this new setting notice that $x_0(t_n)=t_{n-1}$. The corresponding generator of \~{F} thus satisfies \~{x}$_0(t)=t-1$ for all $t\in\mathbb{R}$. Also, by definition for $n\geq 1$,   $x_n=x_0^{-(n-1)}x_1x_0^{n-1}$ which together with the action of $x_0$ on the sequence $(t_m)_{m\in\mathbb{Z}}$ determines the form of the other generators, for $n\geq 1$: \~{x}$_n(t)=t$ for all $t\leq n-1$, \~{x}$_n(t)=\frac{t+n-1}{2}$ for $n-1\leq  t\leq n+1$, \~{x}$_n(t)=t-1$, for all $t\geq n+1$, (see figure 2). In conclusion the group \~{F} is generated by $(\mbox{\~{x}}_n)_{n\in\mathbb{N}}$ and the similar $F$ relations from the infinite presentation of $F$ constitute a presentation of \~{F}. We will see later that it is useful to consider maps  \~{x}$_n$ with negative integers $n$. Our aim is to  give a presentation of the commutator subgroup of $F$, thus it suffices to find a presentation of the commutator subgroup of \~{F}. Using the description of the commutator in Proposition \ref{pro0} we obtain that the commutator in \~{F} is 

\begin{equation}\label{commR}
\mbox{\~{F}}\mbox{}^{'}=\{f\in\mbox{ \~{F}  }|\mbox{ }f(t)=t\mbox{ , }|t|\geq k\mbox{ for some }k\in\mathbb{N}\}
\end{equation}

\end{remark}

\begin{figure}\label{f1}
\begin{center}
\input{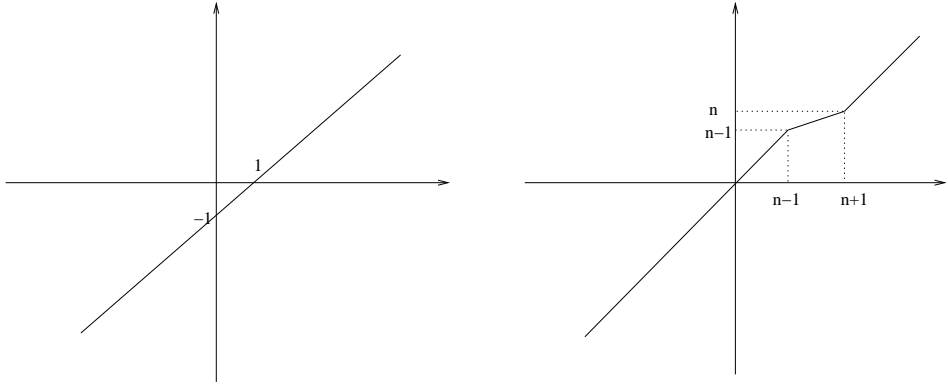}
\caption{Graphs of generators \~{x}$_0$ and \~{x}$_n$, $n\geq 1$ }
\end{center}
\end{figure}

\subsection{Group von Neumann Algebras}
\par If $G$ is a countable discrete group with infinite conjugacy classes (i.c.c.) then the left 
regular representation of $G$ on $l^2(G)$ gives rise to a $II_1$ factor, the group von Neumann algebra $\mathcal{L}(G)$, 
as follows: endow $l^2(G)=\left\{ \psi:G\rightarrow\mathbb{C}\mbox{ }|\mbox{}\sum_{g\in G}|\psi(g)|^2<\infty\right\}$  with the scalar product 
$$\left<\phi,\psi\right>:=\sum_{g\in G}\phi(g)\overline{\psi(g)}$$
The Hilbert space $l^2(G)$ is generated by the countable collection of vectors 
$\left\{\delta_g\mbox{ }|\mbox{}g\in G\right\}$. Also, an element $g\in G$ defines a unitary operator $L_g$, 
on $l^2(G)$  as follows: 
$L_g(\psi)(h)=\psi(g^{-1}h)$, for any $\psi\in l^2(G)$ and any $h\in G$. (Sometimes, to not burden 
the notation we will write just $g$ instead of $L_g$). Now, $\mathcal{L}(G)$, 
the von Neumann algebra generated by $G$ is obtained by taking the wo-closure in $B(l^2(G))$ (all bounded operators on $l^2(G)$), of the linear span of the set 
$\left\{L_g \mbox{ }|\mbox{}g\in G\right\}$ $($if one takes the norm closure of the same linear span then one obtains the reduced $C^*$ algebra of the group, $C^*_r(G)$ $)$. It is well known (see \cite{KR}) that $\mathcal{L}(G)$ is a factor provided $G$ is an i.c.c. group (i.e. every conjugacy class in $G\setminus\{e\}$ is infinite) and it is of type $II_1$. The map defined  by 
$\mbox{tr}(x)=\left<x(\delta_e),\delta_e\right>$, where $e\in G$ is the neutral element and $x\in\mathcal{L}(G)$  
is a faithful, finite, normal trace. The canonical trace also determines the Hilbertian norm $||x||_2=\mbox{tr}(x^{*}x)^{1/2}$. In particular, for $x$, $y$ in $\mathcal L (G)$ the following inequalities hold: $||xy||_2\leq ||x||\mbox{ }||y||_2$ and $||yx||_2\leq ||y||_2\mbox{ }||x||$, where $||x||$ is the usual operator norm of $x$ in $B(l^2(G))$. Also, if $u$ is unitary in $\mathcal{L}(G)$ then $||xu||_2=||ux||_2=||x||_2$ for any $x\in\mathcal{L}(G)$.\\
Finally let us recall an important result of A. Connes (see \cite{Co}): If $G$ is a countable i.c.c.  group, $\mathcal{L}(G)$ is the hyperfinite $II_1$ factor if and only if $G$ is amenable.

\begin{definition} A finite factor $M$ is called asymptotically abelian if there exists a sequence of *-automorphisms $(\rho_n)_{n\in\mathbf N}$ on $M$ such that 
$$||[\rho_n(a),b]||_2\rightarrow 0\mbox{ for }a,b\in M.$$
If each $\rho_n$ is inner then $M$ is called inner asymptotically abelian. 
\end{definition}

\begin{example} (see \cite{Sa})
\begin{itemize}
\item The type $I_n$ factor is not asymptotically abelian. 
\item The hyperfinite $II_1$ factor $\mathcal R$ is asymptotically abelian.
\item Any asymptotically abelian factor is McDuff (this follows from characterization of McDuff property with central sequences).
\item $\mathcal{L}(F_2)\otimes\mathcal R$ is not asymptotically abelian and is a McDuff factor ( $F_2$ is the free group on two generators). 
\item If $M$ is a finite factor then $\otimes_{i=1}^{\infty}M$ is asymptotically abelian. 
\end{itemize}
\end{example}

\subsection{Cost of Groups}
We collect here definitions and some results from \cite{Gab}. 
We say that  $R$ is a SP1 equivalence relation on a standard Borel probability space $(X,\lambda)$ if
\par (S) Almost each orbit $R[x]$ is at most countable and $R$ is a Borel subset of $X\times X$.
\par (P) For any $T\in\mbox{Aut}(X,\lambda)$ such that $\mbox{graph}T\subset R$ we have that $T$ preserves the measure $\lambda$.
\begin{definition}i) A countable family $\Phi=(\varphi_i:A_i\rightarrow B_i)_{i\in I}$
of measure preserving, Borel partial isomorphisms between Borel subsets of $(X,\lambda)$ is called a
graphing on $(X,\lambda)$.\\
ii) The equivalence relation $R_{\Phi}$ generated by a graphing $\Phi$ is the smallest equivalence relation $S$ such that
$(x,y)\in S$ iff $x$ is in some $A_i$ and $\varphi_i(x)=y$. \\
iii) An equivalence relation $R$ is called treeable if there is a graphing $\Phi$ such that $R=R_{\Phi}$ and almost
every orbit $R_{\Phi}[x]$ has a tree structure. In such case $\Phi$ is called a treeing of $R$.
\end{definition}
One can consider the quantity $C(\Phi)=\sum\lambda(A_i)$. The cost
of a (SP1) equivalence relation is defined by the number 
$$C(R):=\inf\{C(\Phi) |  \Phi\mbox{ is a graphing of } R\}$$
It is the preserving property that allows one to conclude the infimum is attained iff $R$ admits a treeing (see Prop.I.11 and Thm.IV.1 in
\cite{Gab}). The numbers $C(R)$ could be interpreted as the "cheapest" measure-theoretical way to generate $R$ with partial isomorphisms on standard probability space $(X,\lambda)$. The cost of a discrete countable group $G$ is  
$$C(G):=\inf\{C(R) |  R \mbox{ coming from a free, measure preserving
action of }G\mbox{ on }X \}$$
If all numbers $C(R)$ are equal then the group is said to be of fixed price. The cost does not depend on the standard Borel probability space $(X,\lambda)$ as all standard Borel spaces are isomorphic as measure spaces. \\
The following statements were proved by Gaboriau. 
\begin{theorem}\label{tga} \cite{Gab}\\
1) The cost of an infinite, amenable group is $1$, fixed price.\\
2) The Thompson group $F$ has cost $1$, fixed price. \\
3) The cost of the free group on $n$ generators is $n$, fixed price. \\
4) If $N$ is a infinite normal subgroup of $G$, of fixed price then\\ $C(N)\geq C(G)\geq 1$.\\
5) Any number $c\geq 1$ is the cost (fixed price) of some group.\\
6)If $G$ is an increasing union of infinite groups $(G_n)_n$ such that $C(G_1)=1$, fixed price and if $G_{n+1}$ is generated by $G_n$ and elements $\gamma\in G$ such that  $\gamma^{-1}G_n\gamma\cap G_n$ is infinite then $G$ is of cost $1$, fixed price.
\end{theorem}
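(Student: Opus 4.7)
The statement bundles six results of Gaboriau, all established in \cite{Gab} through a common framework of graphings and treeings. My plan is to organize the proofs so that the general machinery is developed first and the concrete cost computations follow. The centerpiece is the identity $C(R_\Phi)=C(\Phi)$ whenever $\Phi$ is a treeing, which says that treeings realize the infimum in the definition of $C(R)$. The proof of this is a measure-theoretic Nielsen--Schreier style argument performed on the almost-every tree-orbit, and once it is available part (3) falls out: the Bernoulli shift of $F_n$ is a free, measure preserving action whose Cayley graphing by the $n$ free generators is a treeing of cost exactly $n$ on a space of total mass one, so $C(F_n)=n$, and since this value is independent of the chosen free action $F_n$ is of fixed price.

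Part (1) I would handle via the Connes--Feldman--Weiss theorem: any free, measure preserving action of an infinite amenable group generates a hyperfinite equivalence relation, and a Rokhlin-tower argument then exhibits a single measure preserving automorphism generating the relation modulo a null set, giving $C(R)\le 1$; the reverse inequality $C(R)\ge 1$ is immediate because any generating graphing must cover a complete section. Part (2) is more delicate and is where the new presentations of the paper will presumably help. Using the infinite presentation from Remark \ref{fgen}, one chooses a free action of $F$ where the supports of the generators $x_0,x_1$ overlap in a controlled way through the commutation relations $x_jx_i=x_ix_{j+1}$, so that $x_1$ can be replaced by a partial isomorphism on an arbitrarily small domain without losing generation, driving $C(R)$ down to $1$.

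For the structural statement (4), given a free action of $G$ and a graphing $\Psi$ of the restricted relation $R_N$ on the whole space, I would enlarge $\Psi$ to a graphing of $R_G$ by adjoining partial isomorphisms coming from a system of coset representatives for $G/N$; because $N$ is normal and infinite, each adjoined coset generator is $N$-equivalent to an abundance of points inside its own orbit, so one can shrink its effective domain to arbitrarily small measure. Combined with fixed price this gives $C(G)\le C(N)$, and the universal lower bound $C(G)\ge 1$ is trivial. Part (6) is a direct-limit refinement of the same trick: at each stage one uses the infiniteness of $\gamma^{-1}G_n\gamma\cap G_n$ to realize the new generator $\gamma$ by a partial isomorphism of vanishing cost on top of an approximating graphing of $G_n$ of cost close to $1$. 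Part (5) is then constructive: one exhibits groups of every cost $c\ge 1$ by forming suitable free products of the infinite amenable group (cost $1$) with free groups, invoking Gaboriau's free-product cost formula.

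The principal obstacle is the treeing equality $C(R_\Phi)=C(\Phi)$ of the first paragraph: it is the sole source of lower bounds on cost, and its proof requires subtle measure-theoretic bookkeeping on orbit trees. All other parts are essentially upper-bound engineering with partial isomorphisms, with parts (2), (4) and (6) demanding the most care in how one exploits commutation relations or normality to absorb generators into existing orbits at vanishing cost.
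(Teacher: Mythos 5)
The first thing to note is that the paper does not prove this statement at all: Theorem \ref{tga} is imported verbatim from \cite{Gab} as background (``The following statements were proved by Gaboriau''), so there is no in-paper proof to compare your proposal against. Judged on its own terms, your outline correctly identifies the load-bearing ingredients of Gaboriau's arguments: the equality $C(R_\Phi)=C(\Phi)$ for treeings as the sole source of lower bounds, Connes--Feldman--Weiss plus a Rokhlin tower for part (1), Bernoulli actions with their Cayley treeings for part (3), and cheap absorption of additional generators for parts (4) and (6).

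Two points are genuine gaps. For part (5), free products of cost-$1$ groups with free groups can only produce integer costs, since $C(G_1*G_2)=C(G_1)+C(G_2)$ and every infinite factor contributes at least $1$; your construction as written cannot reach $c=3/2$. Realizing an arbitrary real $c\geq 1$ requires amalgamated free products over finite subgroups, via $C(G_1*_AG_2)=C(G_1)+C(G_2)-C(A)$ with $C(A)=1-1/|A|$ for finite $A$, which yields rational values, together with a further limiting construction for irrational ones. For part (2), your plan to lean on ``the new presentations of the paper'' inverts the logical order: the present paper takes $C(F)=1$ as an input from \cite{Gab} and applies item (6) of this very theorem to its new presentation of $F'$ in order to deduce $C(F')=1$, so proving (2) by routing through the paper's results would be circular in context. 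Gaboriau's own argument for $F$ works directly from the classical generators $x_0,x_1$ and his absorption criterion, and in any case your description of shrinking the domain of $x_1$ ``through the commutation relations'' is not yet an argument --- it is precisely the step where the infinite intersection hypothesis of item (6) has to be verified for a concrete subgroup of $F$.
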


\section{Main result}

Recall the following general principle (von Dyck): let $G=\left< X\mbox{}|\mbox{}\mathcal R\right>$ be a group generated by a set $X$ subject to the set of relators  $\mathcal R$. Let $F(X)$ be the free group on $X$ generators, $H$ is an arbitrary group and $f:X\rightarrow H$ a function. Denote by ${\it v}$ its morphism extension to $F(X)$. If ${\it v}(\mathcal R)=1$ in $H$ then the map $f$ can be extended to a morphism from $G$ to $H$. Moreover, if $f(X)$ generates $H$ then this morphism is surjective. 

\par Before stating the main result we will make some preparations. These will be fully used in the second part of the proof below.  Let us turn to the point of view taken in Remark \ref{remR}. Recall that we can work with the group \~{F} and its generators \~{x}$_n$'s instead of $F$ and $x_n$'s. Moreover, same relations as in remark \ref{fgen} hold in (and present) \~{F}. First we will extend the sequence $(x_n)_n$ for negative values of $n$. Define a sequence of elements in $F$ as follows: \\
$\overline{x_n}:=x_n$ for $n\geq 1$.\\ 
$\overline{x_0}:=x_0x_1x_0^{-1}$.\\ 
$\overline{x_n}:=x_0^{-(n-1)}x_1x_0^{n-1}$ for $n<0$.\\
From the above we get $\overline{x_n}=x_0^{-n}\overline{x_0}x_0^n$ which entails 
$$\overline{x_{n+1}}=x_0^{-1}\overline{x_n}x_0\mbox{, for all }n\in\mathbb{Z}$$
By $y_n$ we will denote the image of $\overline{x_n}$ in \~{F} (see figure 3). 
We have 
\begin{equation}\label{Yrec}
y_{n+1}=\mbox{\~{x}}_0^{-1}y_n\mbox{\~{x}}_0 
\end{equation}
Notice that from the relations of type $\mbox{\~{x}}_j\mbox{\~{x}}_i=\mbox{\~{x}}_i\mbox{\~{x}}_{j+1}$ we obtain by translation
\begin{equation}\label{Ys}
y_jy_i=y_iy_{j+1}\mbox{  for any }i<j\mbox{ ,  }i\mbox{, }j\mbox{  in }\mathbb{Z}
\end{equation}
(The 'obvious' extension $\overline{x}_0=x_0$ would have destroyed (\ref{Ys}) ,e.g. pick  $i=-1$ and $j=0$.)
For $i\in\mathbb{Z}$ we define now the maps   
\~{G}$_i:\mathbb{R}\rightarrow\mathbb{R}$ by $\mbox{\~{G}}_i=y_iy_{i+1}^{-1}$ (see figure 4). 
For example:
$$\mbox{\~{G}}_0(t)=\left\{
\begin{array}{lr}
t,&\mbox{ if }t\leq -1 \\
\frac{t-1}{2},&\mbox{ if }-1\leq t\leq 0 \\
t-\frac{1}{2},&\mbox{ if }0\leq t\leq \frac{1}{2}\\
2t-1,&\mbox{ if }\frac{1}{2}\leq t\leq 1\\
t,&\mbox{ if }1\leq t
\end{array}\right.$$
By (\ref{commR}) we get that each \~{G}$_i$ belongs to the commutator \~{F}$^{'}$.
% Also, with the aid of (\ref{Yrec}) we obtain the following identity:
%\begin{equation}\label{Grec}
%\mbox{\~{G}}_{i+1}=\mbox{\~{x}}_0^{-1}\mbox{\~{G}}_i\mbox{\~{x}}_0=\mbox{\~{G}}_0(\cdot -i)+i
%\end{equation}

\begin{figure}\label{f2}
\begin{center}
\input{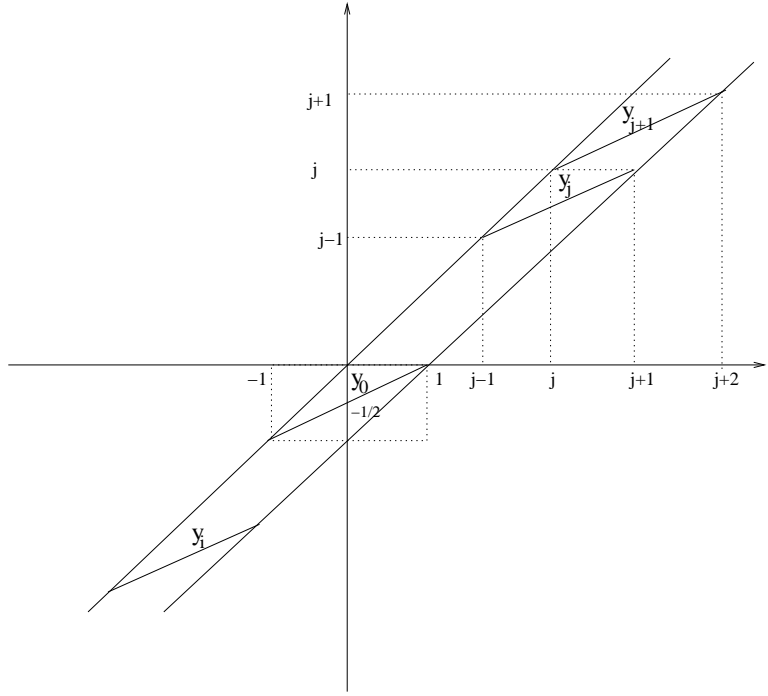}
\caption{Graphs of $y_i$, $i\in\mathbb{Z}$. Note $y_jy_i=y_iy_{j+1}$ when $i<j$.} 

\end{center}

\end{figure}
We are now ready to prove the main result of the paper: 
\begin{theorem}\label{teo}Let $I\subset \mathbf Z$ be a set of consecutive integers and $G$ the group generated (and presented) by $(g_i)_{i\in\mathbf I}$ subject to relations:

\begin{equation}\label{r1}
g_{i-1}g_ig_{i+1}=g_ig_{i+1}g_{i-1}g_i
\end{equation}
\begin{equation}\label{r2}
[g_i,g_j]=1\mbox{ ,   }|i-j|\geq 2
\end{equation}
\noindent {\bf{i)}} If $I=\{0,1,2,...n\}\mbox{  with  }n\geq 4$ then $G\cong F$. \\
{\bf{ii)}} If $I=\mathbf Z$ then $G\cong F^{'}$.\\
{\bf{iii)}} If $I=\mathbf N$ then $G\cong D$.
\end{theorem}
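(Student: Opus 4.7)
My plan is to prove all three cases together by constructing an isomorphism $\phi: G \to H$ (where $H \in \{F, F', D\}$) of the form $g_i \mapsto Y_i Y_{i+1}^{-1}$ for a suitable sequence $(Y_i)$ in $\tilde F$ (after identifying $F$ with $\tilde F$ via $\varphi$). The sequence is dictated by matching the abelianization of $G$ to that of $H$: case (i) uses $Y_i = \tilde x_i$ for $0 \leq i \leq n$ with $Y_{n+1} := 1$ (so $g_n \mapsto \tilde x_n$ carries one of the two classes in $F^{ab} = \mathbb{Z}^2$); case (ii) uses $Y_i = y_i$ for $i \in \mathbb{Z}$ (so every $g_i \mapsto \tilde G_i \in \tilde F'$, matching $G^{ab}=1$); case (iii) uses $Y_i = \tilde x_i$ for $i \geq 0$ (so $g_0 \mapsto \tilde x_0 \tilde x_1^{-1}$ realizes the generator of $D/\tilde F' = \mathbb{Z}$, while $g_i \mapsto \tilde G_i \in \tilde F'$ for $i \geq 1$).

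\textbf{Step 1 (Relations).} Using only the Thompson relation $Y_j Y_i = Y_i Y_{j+1}$ for $i < j$ (the content of (\ref{Ys})), both sides of (\ref{r1}) reduce to $Y_{i-1} Y_{i+2}^{-1}$: the LHS telescopes directly, and the RHS simplifies via the moves $Y_{i+2}^{-1} Y_{i-1} = Y_{i-1} Y_{i+3}^{-1}$, $Y_i Y_{i-1} = Y_{i-1} Y_{i+1}$, and the key collapse $Y_{i+1} Y_{i+3}^{-1} Y_{i+1}^{-1} = Y_{i+2}^{-1}$ (itself a consequence of $Y_{i+1}^{-1} Y_{i+2} Y_{i+1} = Y_{i+3}$). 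Relation (\ref{r2}) follows either by two analogous applications of (\ref{Ys}) or, more cheaply, from the disjoint-support observation that $\tilde G_i$ is the identity outside $[i-1, i+1]$. Boundary cases --- for instance (\ref{r1}) at $i=n-1$ in case (i), which collapses to the single Thompson relation $\tilde x_{n-1}\tilde x_{n-2}=\tilde x_{n-2}\tilde x_n$ --- are brief separate verifications. Von Dyck's principle then produces the desired homomorphism $\phi$.

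\textbf{Step 2 (Surjectivity).} In case (i), set $X_i := g_i g_{i+1}\cdots g_n \in G$; then $\phi(X_i) = \tilde x_i$, and by Lemma~\ref{lem1} these generate $\tilde F$. In case (ii), each $\tilde G_i$ is a commutator (e.g.\ $\tilde G_0 = [\tilde x_0,\tilde x_1]$ and $\tilde G_i$ is its conjugate by $\tilde x_0^{-i}$), so $\phi(G) \subseteq \tilde F'$; the reverse inclusion uses (\ref{commR}): every $f\in\tilde F'$ is supported in some $[-k,k]$ and may be written as a word in $\tilde G_{-k},\ldots,\tilde G_k$ by induction on the dyadic complexity of $f$. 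In case (iii), $\phi(g_0) = \tilde x_0 \tilde x_1^{-1}$ supplies the class in $D/\tilde F' = \mathbb{Z}$ and the $\tilde G_i$'s for $i\geq 1$ supply enough of $\tilde F'$ to reach any element of $D$.

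\textbf{Step 3 (Injectivity, the main obstacle).} By von Dyck applied in the reverse direction, injectivity in case (i) reduces to checking that the $X_i$'s above satisfy the Thompson relations $X_j X_i = X_i X_{j+1}$ in $G$ for $0 \leq i < j \leq n-1$ of Lemma~\ref{lem1}. This verification is the technical heart of the argument: one repeatedly uses (\ref{r2}) to commute distant generators past each other, and (\ref{r1}) in its telescoped form (namely $g_{i-1}g_ig_{i+1}$ behaves as a two-letter word $Y_{i-1}Y_{i+2}^{-1}$) to contract subproducts, with the hypothesis $n\geq 4$ guaranteeing that enough interior indices are available at each rewriting step. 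Cases (ii) and (iii) are then handled either by running the analogous verification against the infinite (resp.\ half-infinite) Thompson presentation, or by a finite-approximation argument reducing any purported kernel element to a word in a finite sub-presentation where case (i) applies. The principal difficulty throughout is reconciling the asymmetry of (\ref{r1}) with the symmetric-looking Thompson relation $X_j X_i = X_i X_{j+1}$, and the bulk of the algebraic labor of the proof should lie here.
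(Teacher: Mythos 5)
Your architecture is sound and genuinely inverts the paper's: the paper maps the known group into the presented one ($F\to G$ in case (i), getting injectivity for free from Proposition \ref{pro0}(ii); in cases (ii)--(iii) it transports the case-(i) generators $x_kx_{k+1}^{-1}, x_n$ into $F(-1,n+1)$ by an explicit piecewise-linear conjugation $\phi_n$ and takes a direct limit), whereas you map $G$ outward via $g_i\mapsto Y_iY_{i+1}^{-1}$. Your Step 1 is correct, including the boundary case at $i=n-1$, and your Step 3 for case (i) is exactly the few-line computation the paper carries out (reduce $g_j\cdots g_n g_i\cdots g_j=g_i\cdots g_n$ by (\ref{r2}) to (\ref{r1})); it is not the bulk of the labor. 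The first genuine gap is surjectivity in cases (ii) and (iii): the assertion that $\tilde{G}_{-k},\dots,\tilde{G}_k$ generate $F(-k-1,k+1)$ is one of the two substantive claims of the whole theorem, and ``induction on the dyadic complexity of $f$'' is a placeholder, not an argument. The paper proves it by verifying $\phi_n g_k'\phi_n^{-1}=\tilde{G}_k$ pointwise, so that generation of $F(-1,n+1)$ is inherited from generation of $F$; you need some argument of comparable substance here.

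The second gap is in your injectivity step for cases (ii)--(iii). The finite-approximation reduction is the right idea, but ``a finite sub-presentation where case (i) applies'' does not literally apply: writing $G_k$ for the group presented by $g_{-k},\dots,g_k$ with (\ref{r1})--(\ref{r2}), the composite $G_k\to G\to\tilde{F}'$ sends the top generator $g_k$ to $\tilde{G}_k$ (a compactly supported element), whereas the case-(i) isomorphism sends the top generator to a non-commutator such as $\tilde{x}_{2k}$; these are different homomorphisms out of $G_k$, so injectivity of one does not transfer to the other. The repair is exactly the device the paper uses for case (i) and which your two-sided von Dyck argument never introduces: $G_k\cong F$ by case (i), every proper quotient of $F$ is abelian by Proposition \ref{pro0}, and the image of $G_k$ in $\tilde{F}'$ contains the non-commuting pair $\tilde{G}_0,\tilde{G}_1$, hence $G_k\to\tilde{F}'$ is injective and any word in $g_{-k},\dots,g_k$ killed by $\phi$ is already trivial in $G_k$, hence in $G$. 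With that observation, and an honest proof of the generation claim, your proof closes up; the same two caveats apply verbatim to case (iii).
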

\begin{proof}
{\bf{i)}} Let $F$ be given as in Lemma \ref{lem1}. Define a map $f(x_n)=g_n$, $f(x_{n-1})=g_{n-1}g_n$,..., $f(x_0)=g_0g_1\cdots g_n$. For ${\it v}$ the corresponding map on the free group 
we check relations (\ref{fig}). We have ${\it v}(x_jx_i)=g_j\cdots g_ng_i\cdots g_n$ and \\ 
${\it v}(x_ix_{j+1})=g_i\cdots g_ng_{j+1}\cdots g_n$ for $0<i<j<n$. It all amounts now to check the following relation: $g_j\cdots g_ng_i\cdots g_j=g_i\cdots g_n$. Because of commutations (\ref{r2}) the left-hand side can be rewritten and the relation to be checked becomes 
$$g_i\cdots g_{j-2}g_jg_{j-1}g_{j+1}g_jg_{j+2}\cdots g_n=g_i\cdots g_n$$
Simplifying by $g_i\cdots g_{j-2}$ to the left and by $g_{j+2}\cdots g_n$ to the right the last equality reduces exactly to (\ref{r1}). Clearly, $(f(x_i))_{i=0}^{n}$ generate $G$, hence by the principle above there exists a surjective morphism $f:F\rightarrow G$. If $\mbox{Ker}f$ is not trivial then by Proposition \ref{pro0} we would get that $G$ is abelian (and this cannot happen as it would be implied that some $g_i$'s are the identity). In conclusion, $f$ is an isomorphism. \\
{\bf{ii)}}
We will make use of the following groups: for $a$, $b$ in $\mathbb{Z}[\frac{1}{2}]$ and $a<b$ define 
$$F(a,b):=\{f\in\mbox{ \~{F}  }|\mbox{ }f(t)=t\mbox{ if }t\notin (a,b)\}.$$
Then $(F(-k,k))_{k\in\mathbb{N}}$ is an increasing sequence of groups and by (\ref{commR}) we have\\ 
$\mbox{\~{F}}\mbox{}^{'}=\cup_{k\geq 2}F(-k-1,k+1)$.

\begin{figure}\label{f3}
\begin{center}
\input{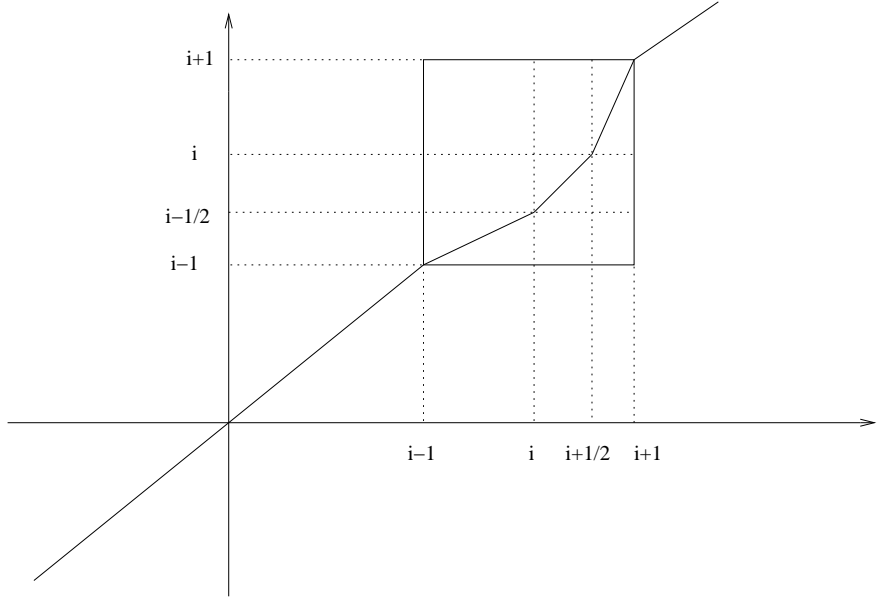}
\caption{Graph of \~{G}$_i=y_iy_{i+1}^{-1}$, $i\in\mathbb{Z}$} 

\end{center}

\end{figure}

We make the following claims:\\
$\bullet$ $F(-k-1,k+1)$ is generated by $\mbox{\~G}_{-k},\cdots \mbox{\~G}_0,\cdots \mbox{\~G}_k$;\\
$\bullet$ $\mbox{\~G}_{-k},\cdots \mbox{\~G}_0,\cdots \mbox{\~G}_k$ satisfy relations (\ref{r1}) and (\ref{r2}) and this gives a presentation of $F(-k-1,k+1)$.\\
Notice that these claims will finish the proof of ii) : the set $(\mbox{\~G}_i)_{i\in\mathbb{Z}}$ will generate \~{F}$^{'}$ and any (extra) relator, being a finite length word in letters $\mbox{\~G}_i$ will be a relator in some $F(-k-1,k+1)$. However, if extra, the relator will violate the presentation of $F(-k-1,k+1)$. By taking $2k=n$ (so that $n\geq 4$) and translating by $k$ the claims to be proved become:\\
$\bullet$ $F(-1,n+1)$ is generated by $\mbox{\~G}_0,\cdots \mbox{\~G}_{n}$;\\
$\bullet$ $\mbox{\~G}_0,\cdots \mbox{\~G}_n$ satisfy relations (\ref{r1}) and (\ref{r2}) and this gives a presentation of\\ $F(-1,n+1)$.\\
To prove the last two claims we will construct an isomorphism between (the original) $F$ and $F(-1,n+1)$ as follows: first, for any $f\in F$ we will denote by $f'$ its extension to $\mathbb{R}$ where $f'(t)=t$ outside $[0,1]$. Next we consider the sequence $s_{-1}=0$, $s_k=1-2^{-k-1}$ for $k=0,...n$ and $s_{n+1}=1$. Let $\phi_n:\mathbb{R}\rightarrow\mathbb{R}$ be the map such that $\phi_n(s_k)=k$ for all $k\in\{-1,0,1,2,...n+1\}$ with $\phi_n$ affine in any interval $[s_k,s_{k+1}]$ and $\phi_n(t)=t-1$ if $t\leq 0$, $\phi_n(t)=t+n$ if $t\geq 1$. It is not hard to check that the map 
$$F\ni f\rightarrow \phi_nf'\phi_n^{-1}\in F(-1,n+1)$$ 
is well-defined and is a group isomorphism. Let $g_k:=x_kx_{k+1}^{-1}$ for $k\in\{0,1,...n-1\}$ and $g_n=x_n$. Then $\{g_k\mbox{}|\mbox{}k=0,1,...,n\}$ generate $F$ and (\ref{r1}) and (\ref{r2}) give a presentation of $F$ (recall $n\geq 4$).
We will finish the proof once we show 
\begin{equation}\label{subcl}
 \phi_ng_k'\phi_n^{-1}=\mbox{\~G}_k\mbox{  for all }k\in\{0,1,...n\} 
\end{equation}
The equality holds outside the interval $(-1,n+1)$ because all $\mbox{\~G}_k$ with $0\leq k\leq n$ are equal to the identity map on that domain. Hence it is enough to show 
\begin{equation}\label{subcl1}
 \phi_ng_k\phi_n^{-1}(t)=\mbox{\~G}_k(t)\mbox{  for all  }t\in [-1,n+1]\mbox{,   for all }k\in\{0,1,...n\} 
\end{equation}
The case $k=n$ can be treated separately, all that is involved being calculations similar to the ones below. So let $0\leq k<n$. Using $x_k$ given in (\ref{xn}) one finds that $g_k$ is affine in between the breakpoints $s_{-1}=0$, $s_{k-1}$, $1-\frac{3}{2^{k+2}}$, $s_k$ and $s_{k+1}$. Also $x_k(u)=u$ for $u\leq s_{k-1}$, $x_k(s_{k+1})=s_{k}$ and $x_k(s_k)=1-\frac{3}{2^{k+2}}$. Now, if $t\leq k-1$ then both sides of (\ref{subcl1}) are equal to $t$. If $t\geq k+1$ then $\mbox{\~G}_k(t)=t$. Also  $x_{k+1}^{-1}(\phi_n^{-1}(t))=\frac{\phi_n^{-1}(t)+1}{2}$. Hence, using again (\ref{xn}) $g_k(\phi_n^{-1}(t))=\phi_n^{-1}(t)$ and (\ref{subcl1}) follows. It remains thus to treat the case $t\in (k-1,k+1)$. Because $\phi_n$ is affine in between $s_{k-1}$, $s_k$ and $s_{k+1}$ and $\mbox{\~G}_k$ is affine in between $k-1$,$k$, $k+\frac{1}{2}$ and $k+1$ it suffices to prove (\ref{subcl1}) for $t=k$ and $t=k+\frac{1}{2}$. Notice $\phi_n(1-\frac{3}{2^{k+2}})=k-\frac{1}{2}$. \\
For $t=k$ we have: \\ $$\phi_nx_kx_{k+1}^{-1}\phi_n^{-1}(k)=\phi_nx_kx_{k+1}^{-1}(s_k)=\phi_nx_k(s_k)=\phi_n(1-\frac{3}{2^{k+2}})$$
$$=k-\frac{1}{2}=\mbox{\~G}_k(k).$$ 
For $t=k+\frac{1}{2}$ we have:
$$\phi_nx_kx_{k+1}^{-1}\phi_n^{-1}(k+\frac{1}{2})=\phi_nx_kx_{k+1}^{-1}(1-\frac{3}{2^{k+3}})=\phi_nx_k(s_{k+1})=\phi_n(s_k)$$
$$=k=\mbox{\~G}_k(k+\frac{1}{2}).$$ 

{\bf{iii)}} Let $\phi_{\infty}:[0,1)\rightarrow [-1,\infty)$ be affine in between the points $\gamma_k=1-2^{-k-1}$ with $\phi_{\infty}(\gamma_k)=k$ for all $k\geq -1$. For any $f\in D$ define an element of $\mbox{\~{F}}$ 
$$h_f(t)=\left\{\begin{array}{lr}
\phi_{\infty}f\phi_{\infty}^{-1}(t),&\mbox{ if }t\geq -1 \\
t,&\mbox{ if }t\leq -1  
\end{array}\right.$$
Because $f$ is trivial in a neighborhood of $t=1$, $h_f$ is trivial outside an interval $[-1,n+1]$. 
The map $$D\ni f\rightarrow h_f\in \cup_{k=0}^{\infty}F(-1,k+1)$$ 
is a group isomorphism. The sequence of groups $(F(-1,k+1))_{k\geq 0}$ is increasing and by the previous proof each $F(-1,k+1)$ is generated and presented by $\mbox{\~G}_0,\cdots \mbox{\~G}_{k}$ with relations (\ref{r1}) and (\ref{r2}). It follows that $\cup_{k=0}^{\infty}F(-1,k+1)$ is generated by $\mbox{\~G}_0,\cdots \mbox{\~G}_{k},\mbox{\~G}_{k+1}\cdots$. Moreover the same relations are satisfied and this gives a presentation of the whole union (because any extra-relator would end up in some $F(-1, n+1)$).
\end{proof}

\begin{remark} Let us sketch an algebraic proof for the presentation of $F^{'}$. What follows is based on discussions with M. Brin.
Again, start with $F$ on the entire real line. We will switch the notations around a bit: the generators are $s(t)=t-1$ and 
$$x_0(t)=\left\{\begin{array}{lr}
t,&\mbox{ if }t\leq 0\\
\frac{t}{2},&\mbox{ if }0\leq t\leq 2\\
t-1,&\mbox{ if }t\geq 2
\end{array}\right.$$  
Also $x_i:=s^{-i}x_0s^i$. We define $G_i:=x_ix_{i+1}^{-1}$ for all $i\in \mathbb{Z}$.
The main point comes into play now: lemma \ref{lem1} is still valid (word for word, eventhough the 'old' $x_1$ is now called $x_0$). 
Let $H$ be the subgroup of $F$ generated by $G_i$, $i\in\mathbb{Z}$. Clearly $H$ is a subgroup of the commutator group, $F^{'}$. We can write $H$ as an increasing union of subgroups $H=\cup_{k\geq 3}H(-k,k)$ where for $n-m\geq 4$  $H(m,n)$ is by definition the subgroup of $H$ generated by $G_m$,...,$G_n$. As in part i) of theorem \ref{teo} we can apply lemma \ref{lem1} and show that $F$ is generated and presented by $G_0$, $G_1$,...$G_{n-m}$ with relations (\ref{r1}) and (\ref{r2}).
As expected $H(m,n)$ is isomorphic to $F$ and the generators $G_m$,..., $G_n$ with their corresponding relations (\ref{r1}) and (\ref{r2}) constitute a presentation of $H(m,n)$. Putting all $H(-k,k)$ together we obtain that $H$ is generated and presented by $G_i$, $i\in\mathbb{Z}$ with (\ref{r1}) and (\ref{r2}). 
\par The equality  $H=F^{'}$ will end the proof. It suffices to show $H$ is normal in $F$ or equivalently that $H$ is invariant under conjugations by $s$ and $x_0^{\pm}$. Conjugations of the $G_i$'s by $s$ only shifts subscripts so that it remains to treat conjugations by $x_0^\pm$. These are further reduced down to the following: $x_0^{-1}G_ix_0$ for $i=-1,0$ and $x_0G_ix_0^{-1}$ for $i=-1,0,1$. We only show that $x_0^{-1}G_0x_0$ is in $H$, all the other cases being reasonable to deal with. As in proof of part i) let 
$g_i=x_ix_{i+1}^{-1}$ for $i=0,1,2,3$ and $g_4=x_4$. Then $x_0=g_0g_1...g_4$, $x_1=g_1g_2...g_4$ and $(g_i)_{i=1,...4}$ satisfy relations (\ref{r1}) and (\ref{r2}). We have:
$$\begin{array}{lr} x_0^{-1}G_0x_0=x_1^{-1}x_0=g_4^{-1}g_3^{-1}g_2^{-1}g_1^{-1}g_0g_1g_2g_3g_4\\
=g_3^{-1}g_2^{-1}g_4^{-1}g_3^{-1}(g_1^{-1}g_0g_1)g_3g_4g_2g_3\\
=g_3^{-1}g_2^{-1}g_1^{-1}g_0g_1g_2g_3\\
=G_3^{-1}G_2^{-1}G_1^{-1}G_0G_1G_2G_3\in H
\end{array}$$
The third equality comes from (\ref{r1}) and the fourth from (\ref{r2}).
\end{remark}

\section{Applications}

\begin{lemma}\label{lem2}
i) For $n\in\mathbf N$ the group morphism determined by the "shift"
$$\rho_n(g_i)=g_{n+i}\mbox{,  }\forall\mbox{ }i\in\mathbf Z\mbox{,  }\forall\mbox{ }n\in\mathbf N$$
is an automorphism of $F^{'}$. \\
ii) For fixed $g$ and $h$ in $F^{'}$ there exists a large $n_0$ such that 
$$[\rho_n(g),h]=1\mbox{  for all  }n\geq n_0.$$
\end{lemma}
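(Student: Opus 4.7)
The plan is to derive both parts directly from the infinite presentation of $F^{'}$ given by Theorem \ref{teo}(ii), namely $F^{'} = \langle (g_i)_{i \in \mathbf{Z}} \mid (\ref{r1}), (\ref{r2}) \rangle$, using only the observation that both families of relations are invariant under translation of indices.

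For part (i), I note that the relator in (\ref{r1}) involves three consecutive indices $i-1,i,i+1$ and the relator in (\ref{r2}) depends only on the difference $|i-j|$, so the entire relator set is preserved by any shift $i \mapsto i+m$ with $m \in \mathbf{Z}$. Hence, by the von Dyck principle recalled at the beginning of Section 3, each assignment $g_i \mapsto g_{i+m}$ extends to a group endomorphism $\rho_m$ of $F^{'}$. Taking $m = n$ and $m = -n$, and checking on generators that both compositions $\rho_n \circ \rho_{-n}$ and $\rho_{-n} \circ \rho_n$ fix every $g_i$, yields that the stated $\rho_n$ is an automorphism of $F^{'}$.

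For part (ii), since $g$ and $h$ belong to $F^{'}$, each can be expressed as a finite word in generators $g_i$ with $|i| \leq M$ for some common bound $M \in \mathbf{N}$. I set $n_0 := 2M + 2$. For every $n \geq n_0$ and every pair of indices $(i,j)$ with $|i|, |j| \leq M$, one has $|(n+i) - j| \geq n - 2M \geq 2$. By relation (\ref{r2}) each generator $g_{n+i}$ appearing in $\rho_n(g)$ then commutes with each generator $g_j$ appearing in $h$, and therefore $\rho_n(g)$ commutes with $h$ letter-by-letter, giving $[\rho_n(g), h] = 1$.

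The only subtle point is confirming that $\rho_n$ is genuinely bijective and not merely an endomorphism, but this is dispatched at once by exhibiting $\rho_{-n}$ as a two-sided inverse, itself well-defined by the same translation-invariance of the presentation. Beyond that, the proof is essentially a bookkeeping argument on the indices supporting $g$ and $h$, so no serious obstacle is anticipated.
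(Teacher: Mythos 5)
Your proposal is correct and follows essentially the same route as the paper: von Dyck's principle applied to the translation-invariance of the relators (\ref{r1}) and (\ref{r2}) to get the endomorphisms $\rho_{\pm n}$, which are mutually inverse, and then the support/commutation bookkeeping with the bound $n_0 = 2M+2$ for part (ii). No gaps.
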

\begin{proof} i) One can use von Dyck's principle again to show that $\rho_n$ extends to a morphism and so  does the map defined by $\rho_{-n}(g_i)=g_{i-n}$. Clearly, these morphisms are inverse to each other. \\
ii) Write $g$ and $h$ as (finite) words in the generators $(g_i)_{i\in\mathbf Z}$ and choose $k\in\mathbf N$ such that for all $g_i$ that occur in these words $|i|\leq k$. Hence if $n\in\mathbf N$, $h$  respectively $\rho_n(g)$) are words in generators $g_i$ of index $i$ in $[-k,k]$, respectively $[n-k,n+k]$. Since $[g_i,g_j]=1$ for $|i-j|\geq 2$ it follows that  $[\rho_n(g),h]=1$, when $n\geq 2k+2$. \end{proof}

\begin{theorem}\cite{JR}\label{jr}.
The $II_1$ factor $\mathcal{L}(F')$ is asymptotically abelian.
\end{theorem}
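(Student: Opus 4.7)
The plan is to use the group automorphisms $\rho_n$ of Lemma \ref{lem2} to build $*$-automorphisms of $\mathcal{L}(F')$ and verify the asymptotic abelian condition, first on group elements and then by a standard $\|\cdot\|_2$-density argument.

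First, since $\rho_n \colon F' \to F'$ is a group automorphism, it induces a unitary $U_n$ on $\ell^2(F')$ by $U_n \delta_g = \delta_{\rho_n(g)}$. Conjugation by $U_n$ sends $L_g$ to $L_{\rho_n(g)}$ and therefore restricts to a trace-preserving $*$-automorphism $\sigma_n$ of $\mathcal{L}(F')$. In particular $\sigma_n$ is isometric for both the operator norm and the $\|\cdot\|_2$-norm.

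Next, for fixed group elements $g,h \in F'$, Lemma \ref{lem2}(ii) gives an $n_0$ such that $\rho_n(g)$ and $h$ commute in $F'$ whenever $n \geq n_0$, and hence
\[
[\sigma_n(L_g), L_h] \;=\; L_{\rho_n(g)} L_h - L_h L_{\rho_n(g)} \;=\; 0 \qquad (n \geq n_0).
\]
By bilinearity the same conclusion holds with $L_g, L_h$ replaced by any finite complex linear combinations $a', b'$ of group unitaries (for some threshold depending on $a', b'$).

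Finally, I would pass to arbitrary $a, b \in \mathcal{L}(F')$ as follows. Given $\varepsilon > 0$, first choose a finite linear combination $a'$ of group unitaries with $\|a - a'\|_2 < \varepsilon/(2\|b\|)$, and then, with this $a'$ now fixed, choose a finite linear combination $b'$ with $\|b - b'\|_2 < \varepsilon/(2\|a'\|)$. Using the standard estimates $\|xy\|_2 \leq \|x\|\, \|y\|_2$ and $\|yx\|_2 \leq \|y\|_2\, \|x\|$ together with $\|\sigma_n(a')\| = \|a'\|$ and $\|\sigma_n(a - a')\|_2 = \|a - a'\|_2$, the decomposition
\[
[\sigma_n(a), b] \;=\; [\sigma_n(a - a'), b] \;+\; [\sigma_n(a'), b - b'] \;+\; [\sigma_n(a'), b']
\]
yields $\|[\sigma_n(a - a'), b]\|_2 \leq 2\|b\|\,\|a - a'\|_2 < \varepsilon$ and $\|[\sigma_n(a'), b - b']\|_2 \leq 2\|a'\|\,\|b - b'\|_2 < \varepsilon$, while the third term vanishes for all $n$ sufficiently large. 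Hence $\|[\sigma_n(a), b]\|_2 \to 0$, which is exactly asymptotic abelianness.

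There is no serious obstacle: the algebra of asymptotic commutation for the generators is packaged in Lemma \ref{lem2}(ii), and everything else is the routine promotion from the dense $*$-subalgebra $\mathbb{C}[F']$ to all of $\mathcal{L}(F')$ using isometry of $\sigma_n$ on $\|\cdot\|_2$ and $\|\cdot\|$. The only step to be a little careful about is the order in which $a'$ and $b'$ are chosen, so that $\|a'\|$ is a fixed finite number when approximating $b$.
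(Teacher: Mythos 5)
Your proof is correct and follows essentially the same route as the paper: implement the shift automorphisms $\rho_n$ of Lemma \ref{lem2} as $*$-automorphisms of $\mathcal{L}(F')$, observe via Lemma \ref{lem2}(ii) that the commutators vanish eventually on the linear span of the group unitaries, and then pass to general elements by density. The only cosmetic difference is that the paper invokes Kaplansky's density theorem for the last step, whereas you carry out the $\|\cdot\|_2$-approximation explicitly (with the correct care about fixing $\|a'\|$ before choosing $b'$), which is equally valid.
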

\begin{proof} Each $(\rho_n)_n$ from Lemma \ref{lem2} extends to an inner *-automorphism of $\mathcal{L}(F')$ denoted by $\hat{\rho_n}$. We will prove that 
\begin{equation}\label{as1}
\forall x\mbox{, }y\in\mathcal{L}(F')\mbox{ :  }\lim_{n\rightarrow\infty}||[\hat{\rho_n}(x),y]||_2=0
\end{equation}
which implies that $\mathcal{L}(F')$ is asymptotically abelian. By Kaplansky's density theorem it is sufficient to prove (\ref{as1}) for $x\mbox{, }y\in\mbox{span}\{L_g\mbox{ }|\mbox{ }g\in F'\}$. From Lemma \ref{lem2} it follows that for such $x$ and $y$:
$$[\hat{\rho_n}(x),y]=0\mbox{ eventually for }n\rightarrow\infty.$$
So in particular (\ref{as1}) holds. 
\end{proof}

\begin{remark}i) In \cite{JR} Jolissaint proves a stronger result, namely that $\mathcal{L}(F')$ is inner asymptotically abelian, i.e.
$$\lim_{n\rightarrow\infty}||[\alpha_n(x),y]||_2=0\mbox{ for }x,y\in\mathcal{L}(F')$$
holds for a sequence of inner automorphisms of $\mathcal{L}(F')$. This result can also be obtained by modifying the proofs of Lemma \ref{lem2} and Theorem \ref{jr}.
\par First observe that for each $k\mbox{, }n\in\mathbf N$ there exists a $h=h_{k,n}\in F'$ such that
\begin{equation}\label{as2}
 h^{-1}g_ih=g_{i+n}\mbox{  when }|i|\leq k
\end{equation}
One can namely choose $h$ such that the corresponding element $\tilde{h}$ in \~{F} has a graph as depicted in figure 5, where $s=n-k-1$ and $ t=n+k+1$. Let $\sigma_m\in\mbox{Aut}(F')$ be the inner automorphism 
$$\sigma_m=\mbox{ ad }h_{m,2m+2}^{-1}\mbox{, }m\in\mathbf N.$$
Then it is clear from the proof of Lemma \ref{lem2} that if $g\mbox{, }h\in\mathcal{L}(F')$ are words in the generators $g_{-k}\mbox{, }g_{1-k}\cdots\mbox{, }g_k$ then 
$$[\sigma_m(g),h]=1\mbox{  for  }m\geq k.$$
Hence the proof of Theorem \ref{jr} works with $(\hat{\rho_n})_{n=1}^{\infty}$ replaced by $(\hat{\sigma_m})_{m=1}^{\infty}$, where $\hat{\sigma_m}=\mbox{ ad }(L_{h_{m,2m+2}^{-1}})$ is an inner automorphism of $\mathcal{L}(F')$ for every $m\in\mathbf N$. 
\begin{figure}\label{f4}
\begin{center}
\input{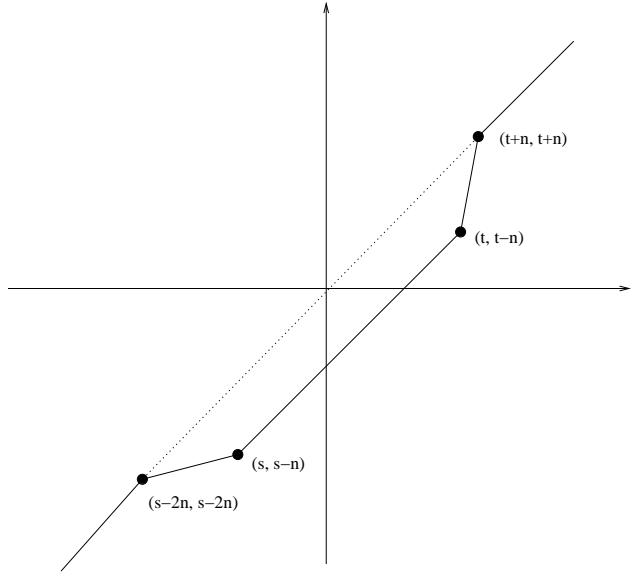}
\caption{Graph of $\tilde{h}$} 

\end{center}

\end{figure}

ii) The argument in the proof of Theorem \ref{jr} does not work if we replace $F'$ with the intermediate group $D$ or with the Thompson's group $F$. Hence we find the following question very interesting : Is $\mathcal{L}(F)$ ( or $\mathcal{L}(D)$ ) asymptotically abelian?
\end{remark}

\begin{theorem} Any non-trivial normal subgroup of $F$ has cost $1$. 
\end{theorem}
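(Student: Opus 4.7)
The plan is to reduce to computing the cost of $F'$ and then to invoke Gaboriau's monotonicity inequality for normal subgroups. By Proposition~\ref{pro0}, every non-trivial quotient of $F$ is abelian, so every non-trivial normal subgroup $N$ of $F$ contains the simple commutator subgroup $F'$. Once $C(F')=1$ with fixed price is established, I will apply Theorem~\ref{tga}(4) to the inclusion $F' \trianglelefteq N$: since $F'$ is then an infinite normal subgroup of $N$ of fixed price, the inequality yields $1 = C(F') \geq C(N)$, while the general lower bound $C(N)\geq 1$ for infinite groups forces $C(N)=1$.

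The heart of the argument is therefore the computation $C(F')=1$, fixed price, and for this I will combine the presentation of $F'$ from Theorem~\ref{teo}(ii) with Theorem~\ref{tga}(6). From the proof of Theorem~\ref{teo}(ii), $F' = \bigcup_{k\geq 1} F(-k-1,k+1)$, with $F(-k-1,k+1)=\langle \tilde G_{-k},\dots,\tilde G_k\rangle \cong F$; in particular each $F(-k-1,k+1)$ has cost $1$, fixed price by Theorem~\ref{tga}(2). To apply Theorem~\ref{tga}(6) I must check that $F(-k-2,k+2)$ is generated by $F(-k-1,k+1)$ together with elements $\gamma$ for which $\gamma^{-1} F(-k-1,k+1)\gamma \cap F(-k-1,k+1)$ is infinite. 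The natural choice is $\gamma\in\{\tilde G_{k+1},\tilde G_{-k-1}\}$, and the required condition is transparent from the new presentation: relation~(\ref{r2}) gives $[\tilde G_{k+1},\tilde G_i]=1$ for all $|i-(k+1)|\geq 2$, and in particular $\tilde G_{k+1}$ commutes with $\tilde G_0$. Since $\tilde G_0$ has infinite order (as $F$ is torsion-free), the infinite cyclic subgroup $\langle \tilde G_0\rangle$ lies in $\tilde G_{k+1}^{-1} F(-k-1,k+1)\tilde G_{k+1}\cap F(-k-1,k+1)$, so the intersection is infinite; the symmetric argument handles $\tilde G_{-k-1}$. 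Theorem~\ref{tga}(6) then yields $C(F')=1$, fixed price.

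The main technical point is exactly this production of infinite subgroups inside the conjugate-intersections; it is immediate from the commuting relation~(\ref{r2}) of the new presentation, which is precisely why Theorem~\ref{teo} is so well-adapted to the cost computation. Combined with the reduction in the first paragraph, this completes the proof.
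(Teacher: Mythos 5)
Your proof is correct, and its overall strategy coincides with the paper's: reduce to $C(F')=1$ via Proposition~\ref{pro0} and Theorem~\ref{tga}(4), then realize $F'$ as an increasing union to which Theorem~\ref{tga}(6) applies, with the commutation relations (\ref{r2}) supplying the infinite conjugate-intersections. Where you differ is in the choice of exhausting chain. You reuse the subgroups $F(-k-1,k+1)\cong F$ already built in the proof of Theorem~\ref{teo}(ii), take Theorem~\ref{tga}(2) as the base case, and adjoin the two generators $\tilde G_{\pm(k+1)}$ at each stage, exhibiting $\langle \tilde G_0\rangle$ inside each conjugate-intersection. The paper instead starts from the abelian (hence infinite amenable, cost $1$ by Theorem~\ref{tga}(1)) subgroup $G_0=\langle g_{2i}: i\in\mathbf{Z}\rangle$ and adjoins the odd-indexed generators one at a time, using $\{g_{2i}: i<0\}$ or $\{g_{2i}: i>0\}$ as the infinite set in the intersection. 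Your version leans on the nontrivial input that $F$ itself has cost $1$ (Theorem~\ref{tga}(2)), whereas the paper's base case needs only amenability of an abelian group; on the other hand your chain is already available from Theorem~\ref{teo}(ii) and requires no new subgroups. Two small points to tidy: the presentation claims for $F(-k-1,k+1)$ are established in the paper only for $k\geq 2$ (so start your chain there, which changes nothing since the union is increasing), and for the commutation $[\tilde G_{k+1},\tilde G_0]=1$ you need $k\geq 1$, which that starting point also guarantees.
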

\begin{proof}
Because any proper quotient of $F$ is abelian, any non-trivial, normal subgroup of $F$ must contain $F^{'}$. Thus it suffices to show $C(F^{'})=1$, fixed price. We will write $F'$ as an increasing union of groups $G_n$ such that $G_0$ is of cost $1$, fixed price and $G_{n+1}$ is obtained out of $G_n$ and elements $g$ with the property $g^{-1}G_ng\cap G_n$ is infinite. 
Let $G_0$ be the subgroup of $F'$ generated by $(g_{2i})_{i\in\mathbf Z}$. From Theorem \ref{teo} $G_0$ is abelian, therefore its cost is $1$. Let $G_1$ be generated by $G_0$ and $g_{\pm 1}$. Because of the commutation relations we have $g_1^{-1}G_0g_1\cap G_0\supset\{g_{2i}\mbox{}|\mbox{}i<0\}$ and  $g_{-1}^{-1}G_0g_{-1}\cap G_0\supset\{g_{2i}\mbox{}|\mbox{}i>0\}$. Now it is clear how to continue: gradually add a generator $g_i$ of odd subscript to a previuos $G_n$ and use the commutation relations to insure that the set $g_i^{-1}G_ng_i\cap G_n$ is infinite. Because the even subscript generators are already in $G_0$ the $G_n$'s will exhaust the group $F'$. We can now apply theorem \ref{tga}(6) and end the proof. 
\end{proof}

\begin{definition}
A separable $C^{*}$-algebra $R$ is called residually finite dimensional (RFD) if for each non-zero $x\in R$ there exists a *-homomorphism $\pi: R \rightarrow B$ such that
$\mbox{ dim }(B) <\infty$ and $\pi(x)\neq 0$. Equivalently $R$ embeds in a $C^{*}$-algebra of the form $\prod_{n=1}^{\infty} M_{k(n)}(\mathbb{C})$ where $M_{k}(\mathbb{C})$ is the algebra of $k\times k$ matrices over the complex numbers. 
\end{definition}
We will prove that both the reduced $C^{*}$-algebra $C_r^{*}(F)$ and the full $C^{*}$-algebra
$C^{*}(F)$ associated with $F$ are not residually finite dimensional. The proof is essentially based on the fact that $F$ is not a residually finite group. However the two 'residual' notions do not compare in general. There exist residually finite groups whose reduced $C^*$-algebras are not RFD (e.g. the free non abelian group on two generators) and there exist non residually finite groups whose reduced $C^*$-algebras are RFD (e.g. $(\mathbb{Q}, +)$).

\begin{lemma}\label{nonfd}Let $A$ be a (unital) finite dimensional algebra over an arbitrary field. Then 
$F^{'}$ can not be faithfully represented in $A$. 
\end{lemma}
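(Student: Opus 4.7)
The plan is to derive a contradiction by combining the simplicity of $F^{'}$ (Proposition \ref{pro0}), the explicit embeddings $F\hookrightarrow F^{'}$ coming out of the proof of Theorem \ref{teo}(ii), and Mal'cev's classical theorem that every finitely generated linear group is residually finite. Suppose, for contradiction, that there is an injective group homomorphism $\pi\colon F^{'}\hookrightarrow A^{\times}$ into the unit group of $A$. Letting $K$ denote the base field and $n=\dim_K A$, the left regular representation embeds $A\hookrightarrow M_n(K)$; composing with $\pi$ gives an embedding $\pi\colon F^{'}\hookrightarrow GL_n(K)$.

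Next I exploit the fact that $F^{'}$ contains finitely generated subgroups isomorphic to $F$. From the proof of Theorem \ref{teo}(ii), for each $j\geq 2$ the subgroup $F(-j-1,j+1)\subset\tilde F^{'}\cong F^{'}$ is isomorphic to $F$: translating by $j$ and conjugating by the explicit piecewise-linear map $\phi_{2j}$ produces the isomorphism. Restricting $\pi$ to any one such subgroup therefore yields a faithful representation $F\hookrightarrow GL_n(K)$.

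By Mal'cev's theorem, every finitely generated subgroup of $GL_n(K)$, for any field $K$, is residually finite. Since $F$ is finitely generated, $F$ would then be residually finite. I will complete the proof by showing this is false. If $\phi\colon F\to G$ is any homomorphism to a finite group $G$, then $\phi(F^{'})$ is a finite quotient of the simple infinite group $F^{'}$, hence trivial, so $F^{'}\subseteq\ker\phi$. Consequently $F^{'}$ is contained in the intersection of all finite-index normal subgroups of $F$, and since $F^{'}\neq\{e\}$ this intersection is non-trivial, contradicting residual finiteness of $F$ and therefore the existence of $\pi$.

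The main point that requires attention is the appeal to Mal'cev's theorem in the generality of an arbitrary base field $K$, rather than just $\mathbb{C}$; this is precisely the classical statement of Mal'cev, and the rest of the argument is a direct assembly of results already in hand: the simplicity of $F^{'}$ from Proposition \ref{pro0} and the concrete embeddings $F\hookrightarrow F^{'}$ produced in the proof of Theorem \ref{teo}(ii).
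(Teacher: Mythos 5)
Your argument is correct, but it takes a genuinely different route from the paper's. The paper gives a self-contained dimension count that uses nothing beyond the commutation relations (\ref{r2}) from the new presentation of $F'$: setting $A_0=A$ and letting $A_{i+1}$ be the commutant of $\{g_{3i},g_{3i+1}\}$ in $A_i$, the relations $[g_i,g_j]=1$ for $|i-j|\geq 2$ put $g_{3(i+1)},g_{3(i+1)+1},\dots$ inside $A_{i+1}$, while the failure of $g_{3i}$ and $g_{3i+1}$ to commute makes each inclusion $A_{i+1}\subsetneq A_i$ proper, forcing $\dim A=\infty$. You instead pass to $GL_n(K)$ via the left regular representation of $A$, restrict to a finitely generated copy of $F$ inside $F'$ (such as $F(-j-1,j+1)$ from the proof of Theorem \ref{teo}), and combine Mal'cev's theorem with the non-residual-finiteness of $F$, which you correctly deduce from the simplicity of the infinite group $F'$ and Proposition \ref{pro0}. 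Both arguments are valid over an arbitrary field. Your route makes precise the authors' own remark that the proof is ``essentially based on the fact that $F$ is not a residually finite group,'' and it is conceptually shorter, but at the cost of importing Mal'cev's theorem, a nontrivial external result; the paper's route is entirely elementary and is designed to showcase the new presentation, since the commutant chain is exactly where relations (\ref{r2}) do the work. One small point you should make explicit: a faithful multiplicative representation of $F'$ in $A$ a priori sends the identity of $F'$ only to an idempotent $p\in A$, so before invoking the regular representation you should replace $A$ by the unital finite-dimensional algebra $pAp$, in which the images of group elements are genuinely invertible; this is immediate but deserves a sentence.
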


\begin{proof}
 
Assume that $F^{'}$ can be faithfully represented in $A$ and let $(g_i)_{i\in\mathbb{Z}}$ be our generators for $F^{'}$. For simplicity of notation we will consider $F^{'}$ as a subset of $A$. Define now:

  $$\begin{array}{lr} A_0 = A\\
  A_1 = \mbox{ the commutant of }\{g_0, g_1\}\mbox{ in }A_0\\
  A_2 = \mbox{ the commutant of }\{g_3, g_4\}\mbox{ in }A_1\\
  A_3 =\mbox{ the commutant of }\{g_6, g_7\}\mbox{ in }A_2\\
\mbox{ etc. }
\end{array}$$

Since $g_i$ and $g_j$ commute when $|i-j|\geq 2$ we have:
$$\begin{array}{lr}g_3, g_4, g_5\cdots\in A_1\\
g_6, g_7, g_8\cdots\in A_2\\
\mbox{ etc. }
\end{array}$$
But since $g_{3i}$ and $g_{3i+1}$ do not commute, $A_{i+1}$ is a proper subalgebra
of $A_i$. Hence
  $$\mbox{dim}(A_i/A_{i+1})\geq 1, i = 0,1,2,\cdots$$
which implies that $A$ is infinite dimensional.
\end{proof}

\begin{theorem}
 $C_r^{*}(F)$ and $C^*(F)$ are not RFD.
\end{theorem}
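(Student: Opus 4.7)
The plan is to combine Lemma~\ref{nonfd} with the simplicity of $F'$ (Proposition~\ref{pro0}) to show that every finite-dimensional $*$-representation of $C^*(F)$ or $C^*_r(F)$ is trivial on the canonical copy of $F'$. Since $F'$ is infinite, this would force every finite-dimensional representation to kill a large supply of nonzero elements of the algebra, which is incompatible with being RFD.

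The first step is to fix a finite-dimensional $*$-representation $\pi\colon R\to M_k(\mathbb C)$, where $R$ stands for either $C^*(F)$ or $C^*_r(F)$. Composing with the natural map $F\to R$, $g\mapsto L_g$, I obtain a unitary representation of $F$ in $M_k(\mathbb C)$. Its restriction to $F'$ takes values in a finite-dimensional unital subalgebra of $M_k(\mathbb C)$, so by Lemma~\ref{nonfd} this restriction cannot be injective as a group homomorphism. Because $F'$ is simple, its kernel must equal all of $F'$; hence $\pi(L_g)=I$ for every $g\in F'$, i.e.\ $\pi(L_g-L_e)=0$ for all such $g$.

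The second step is to assume for contradiction that $R$ is RFD. Then the finite-dimensional $*$-representations of $R$ jointly separate its points. Pick any $g\in F'\setminus\{e\}$; by the previous paragraph, \emph{every} finite-dimensional $*$-representation sends $L_g-L_e$ to zero, so separation forces $L_g-L_e=0$ in $R$. This is the contradiction we want, provided $L_g\neq L_e$ in $R$.

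The one point that needs attention — and the closest thing to an obstacle — is precisely this nonvanishing of $L_g-L_e$ in both algebras. For $C^*_r(F)$ it is immediate, since the left regular representation is the defining faithful representation. For $C^*(F)$ it follows by composing with the canonical surjection $C^*(F)\to C^*_r(F)$ and again using that $L_g$ has infinite order in the left regular representation. Once this is in place, the proof is a direct assembly of Lemma~\ref{nonfd}, the simplicity of $F'$, and the definition of RFD, with no further calculation required.
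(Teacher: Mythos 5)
Your proof is correct and follows essentially the same route as the paper: both arguments combine Lemma~\ref{nonfd} with the simplicity of $F'$ to show that any finite-dimensional $*$-representation must kill $F'$, and then contradict RFD using the faithfulness of $g\mapsto L_g$. The only difference is organizational — the paper fixes one $g\in F'\setminus\{e\}$ and extracts a single matrix block where it survives, while you quantify over all finite-dimensional representations first and then invoke point separation — but the mathematical content is identical.
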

\begin{proof} We will consider $F$ as a subset of (unitary) operators in the $C^{*}$ algebra $A$, where $A$ is either $C_r^{*}(F)$ or $C^*(F)$. Suppose $A$ is RFD. Then there exists an embedding 
$\pi:A\rightarrow \prod_{n=1}^{\infty} M_{k(n)}(\mathbb{C})$. It follows that 
$\pi_{|F}:F\rightarrow \mathcal{U}(\prod M_{k(n)}(\mathbb{C}))$ is a one to one group morphism. Hence, for $g\in F^{'}$, $g\neq 1$ there exists $k$ such that $p_{k}\pi(g)\neq I_{k}$ where $p_{k}$ is the projection map onto $M_{k}(\mathbb{C})$ and $I_{k}$ is the identity matrix. We have obtained a group morphism $\psi:=p_{k}\pi_{|F}$ from $F$ to the group of invertible matrices $GL_{k}(\mathbb{C})$ which is not trivial on $F^{'}$. Because $F^{'}\cap\mbox{ Ker }\psi$ is a normal subgroup, by Proposition \ref{pro0}  $\psi$ must be one to one on $F^{'}$. This of course contradicts Lemma \ref{nonfd}. 
\end{proof}

\begin{remark} Residually finite dimensional algebras are an important class of quasidiagonal $C^{*}$-  algebras (for a detailed account of these algebras we refer the reader to \cite{Nbr}). By a theorem of Rosenberg in \cite{Ha}, if $G$ is a countable discrete group and $C_r^*(G)$ is quasidiagonal then $G$ is amenable. It is believed that the converse should also be true.  
\end{remark}

\begin{acknowledgements}We thank Matt Brin for useful discussions and for his interest in our results.
\end{acknowledgements}


\begin{thebibliography}{abcdefg}
\bibitem[AGS]{AGS}
G.Arzhantseva, V.Guba and M.Sapir, {\em {M}etrics on diagram groups and uniform embeddings in a {H}ilbert space}, Comment.Math.Helv. 81 (2006), no.4, 911-929

\bibitem[Br]{Br}
M.G.Brin, {\em {E}lementary {A}menable `{S}ubgroups of {R}. {T}hompson's {G}roup {F}}, International Journal of Algebra and Computation, vol.15, no.4 (2005),619-642

\bibitem[NBr]{Nbr}
N. Brown, {\em {O}n quasidiagonal $C^{*}$-algebras}, Operator algebras and applications, 19-64, Adv.Stud.Pure Math., 38, Math.Soc.Japan, Tokyo, 2004

\bibitem[Ca]{Can}
J.W.Cannon, W.J.Floyd, and W.R.Parry, {\em {I}ntroductory {N}otes on {R}ichard {T}homson's {G}roups}, L'Enseignement Mathematique, t.42 (1996), p.215-256

\bibitem[Co]{Co}
A.Connes, {\em {C}lassification of {I}njective {F}actors. {C}ases $II_1$,$II_{\infty}$,$III_{\lambda}$,$\lambda\neq 1$}, Ann.of Math. 104(1976),73-115


\bibitem[Ga]{Gab} 
D.Gaboriau, {\em {C}o$\hat{u}$t des relations d'equivalence et des groupes}, Invent.Math.139, 41-98 (2000)

\bibitem[Gr]{Gr}
R.I.Grigorchuck, {\em {A}n example of a finitely presented amenable group that does not belong to the class EG}, Mat.Sb.,189(1)(1998), 79-100

\bibitem[Ha]{Ha}
D.Hadwin, {\em {S}trongly quasidiagonal $C^{*}$-algebras. {W}ith an appendix by {J}onathan {R}osenberg.}, J.Operator Theory 18(1987),no.1, 3-18


\bibitem[Jo1]{Jol}
P.Jolissaint, {\em {C}entral {S}equences in the {F}actor {A}ssociated with the {T}hompson's {G}roup F}, Annales de l'institut Fourier, tome 48, $n^o$4 (1998),p.1093-1106


\bibitem[Jo2]{JR}
P.Jolissaint, {\em {O}perator algebras related to Thompson's group F}, J.Aust.Math.Soc. 79 (2005), no.2, 231-241


\bibitem[KR]{KR}
R.V.Kadison and J.Ringrose, {\em {F}undamentals of the theory of operator algebras}, Vol.II, Academic  Press 1986

\bibitem[OS]{OS}
A.Yu.Olshanskii and M.Sapir, {\em {N}on-amenable finitely presented torsion-by-cyclic groups}, Publ.Math.Inst.Hautes Etudes Sci.No 96(2002),43-169

\bibitem[Sa]{Sa}
S.Sakai, {\em {A}symptotically {A}belian $II_1$ {F}actors}, Publ. Res.Inst.Math.Sci.Ser.A, 4, 1968/1969, p.299-307

\end{thebibliography}
\end{document}